\numberwithin{equation}{section}
\newtheorem{thm*}{Theorem}
\newtheorem{thm}{Theorem}[section]
\newtheorem{cor}[thm]{Corollary}
\newtheorem{lemma}[thm]{Lemma}
\newtheorem{prop}[thm]{Proposition}
\theoremstyle{definition}
\newtheorem{definition}[thm]{Definition}
\theoremstyle{remark}
\newtheorem{remark}[thm]{Remark}
\newcommand{\rd}{\mathrm{d}}
\newcommand{\C}{\mathbb{C}}
\newcommand{\R}{\mathbb{R}}
\newcommand{\N}{\mathbb{N}}
\DeclarePairedDelimiterX{\set}[1]{\lbrace}{\rbrace}{\,#1\,}
\DeclarePairedDelimiterX{\setcond}[2]{\lbrace}{\rbrace}{\,#1\,\mathclose{} :\mathopen{}\,#2\,}
\DeclareMathOperator{\divergence}{div}
\DeclareMathOperator{\Dom}{Dom}
\DeclareMathOperator{\Ran}{ran}
\DeclareMathOperator{\Span}{Span}
\DeclareMathOperator{\sign}{sign}
\DeclareMathOperator{\real}{Re} % real part
\DeclarePairedDelimiter{\braket}{\langle}{\rangle}
\DeclarePairedDelimiter{\norm}{\lVert}{\rVert}
\date{\today}
\title{Friedlander's inequality and the de Rham complex}
\author[M. Fries]{Magnus Fries}
\author[M. Goffeng]{Magnus Goffeng}
\author[G. Miranda]{Germ\'an Miranda}
\address[M. Fries]{Department of Mathematics, Lund University, Sweden}
\email{magnus.fries@math.lth.se}
\address[M. Goffeng]{Department of Mathematics, Lund University, Sweden}
\email{magnus.goffeng@math.lth.se}
\address[G. Miranda]{Department of Mathematics, Lund University, Sweden}
\email{german.miranda@math.lth.se}
\subjclass[2020]{
	\href{https://mathscinet.ams.org/mathscinet/msc/msc2020.html?t=35P15}{35P15}, 
	\href{https://mathscinet.ams.org/mathscinet/msc/msc2020.html?t=58J10}{58J10}, 
	\href{https://mathscinet.ams.org/mathscinet/msc/msc2020.html?t=58J50}{58J50}
	}
\keywords{Eigenvalue inequalities, de Rham complex, absolute and relative boundary conditions, variational principle}
\begin{document}

\begin{abstract}
Inequalities between Dirichlet and Neumann eigenvalues of the Laplacian and of other differential operators have been intensively studied in the past decades. The aim of this paper is to introduce differential forms and the de Rham complex in the study of such inequalities. We show how differential forms lie hidden at the heart of the work of Rohleder on inequalities between Dirichlet and Neumann eigenvalues for the Laplacian on planar domains. 
\end{abstract}

\maketitle 

\section{Introduction}
Let \(\Omega \subset \mathbb{R}^d\) be an open, connected, bounded domain with Lipschitz boundary. We write \(\lambda_j(T)\) for the \(j\):th eigenvalue, ordered increasingly, for a positive operator \(T\) with discrete spectrum. We denote by \(\Delta_D\) and \(\Delta_N\) the Dirichlet and Neumann realization of the Laplacian on \(\Omega\). The main goal of this paper is to introduce methods of differential forms and the de Rham complex as a tool for obtaining inequalities between eigenvalues of \(\Delta_D\) and \(\Delta_N\). 

The inequality \(\lambda_2(\Delta_N) <\lambda_1(\Delta_D)\) appears already in the work of P\'olya for \(d=2\) \cite{Polya}, and it was extended by Payne to \(\lambda_{j+2}(\Delta_N) <\lambda_{j}(\Delta_D)\) for all \(j=1,2, \ldots\) when \(\Omega\) is a \(C^2\) convex domain \cite{Payne}. Later, Levine and Weinberger generalized the inequality \(\lambda_{j+d}(\Delta_N)<\lambda_{j}(\Delta_D)\) considering a convex domain in \(\mathbb{R}^d\) with \(C^2\) boundary with H\"older continuous second derivatives \cite{LevineWeinberger}. As pointed out by Levine and Weinberger, the previous inequality can be extended by approximation to \(\lambda_{j+d}(\Delta_N)\leq \lambda_{j}(\Delta_D)\) for all convex bounded domains. Moreover, they recovered the inequality \(\lambda_{j+1}(\Delta_N) < \lambda_j (\Delta_D)\) for domains with \(C^2\) boundary and non-negative mean curvature proven by Aviles \cite{Aviles}. 

In 1991, Friedlander \cite{Friedlander} used properties of the Dirichlet-to-Neumann operator to prove 
\begin{equation}
	\label{eq:fried} 
	\lambda_{j+1}(\Delta_N) \leq \lambda_j(\Delta_D),
\end{equation}
for all \(j=1,2,\ldots,\) and for \(C^1\) domains and no curvature assumption. The smoothness assumption was removed and the inequality was proven to be strict by Filonov in \cite{Filonov} in a beautiful argument using Glazman's lemma, see more in the textbook \cite{SpectralGeometryBook}. We call Equation \eqref{eq:fried} Friedlander's inequality. Recently, Rohleder \cite{rohlfr} proved that for any simply connected, bounded, Lipschitz domains in \(\R^2\), there is an inequality 
\begin{equation}
	\label{eq:rohlineq}
	\lambda_{j+2}(\Delta_N)\leq \lambda_{j}(\Delta_D),
\end{equation}
for any \(j\in \N\). We combine the ideas of \cite{rohlfr} with the de Rham complex \cite{leschbruening} into a common framework that in arbitrary dimension allows a generalization of Rohleder's results on the curl curl operator \cite{RohlederCurlCurl}, as well as a short proof of Rohleder's inequality \eqref{eq:rohlineq} in dimension two. 

The main novelty in this paper is found in the method we introduce. As mentioned, the method stems in the de Rham complex. Since our geometries have boundaries, we require an appropriate boundary condition. We use the so called absolute boundary condition allowing us to rely on previous work \cite{leschbruening} on this well studied Hilbert complex. What is promising with this method is that it allows us to give concise proofs of work of Rohleder \cite{rohlfr,RohlederCurlCurl} and carries large dimensions of internal degrees of freedom that holds hope of pushing the estimates even further. 

It was conjectured that 
\begin{equation}
\label{eq:conjecture}
	\lambda_{j+d}(\Delta_N) < \lambda_j (\Delta_D)
\end{equation}
holds for all domains \(\Omega\subset \mathbb{R}^d\) with no convexity assumption \cite{BenguriaLevitinParnovski}. If \(d=2,3\), \eqref{eq:conjecture} is sharp because the unit ball is an edge case, namely \(\lambda_{d+2}(\Delta_N) > \lambda_1(\Delta_D)\). For \(d\geq 4\), it was observed in \cite{cox2019isoperimetricrelationsdirichletneumann} that for the unit ball we have more than \(d+1\) Neumann eigenvalues strictly below the first Dirichlet eigenvalue. For example, if \(d=4\) we have \(\binom{d}{d-1} + \binom{d+1}{d-1}\) Neumann eigenvalues strictly below the first Dirichlet eigenvalue. Recall that, for the unit ball, these binomial coefficients are connected with the dimension of the space of spherical harmonics of certain degrees (see \cite[Section 1.2.3]{SpectralGeometryBook} for more details). In a recent work, Freitas\cite{freitas2024growinggapdirichletneumann} studied the gap between Dirichlet and Neumann eigenvalues with respect to the index \(j\). It was conjectured that
	\begin{equation}\label{eq:freitasShift}
		\lambda_{j + \lfloor c(d,j)\rfloor}(\Delta_N) \leq \lambda_j (\Delta_D),
	\end{equation}
	where \(c(d,j) = \frac{d V_{d-1}}{2V_{d}^{1-2/d}} j^{1-1/d}\) and \(V_d\) is the volume of the d-dimensional unit ball. Moreover, for \(d\geq 4\) and all \(j\in \N\) we have the weaker inequality
	\begin{equation}
		\lambda_{j+\lfloor C_{\Omega} j^{1-3/d}\rfloor} (\Delta_N) \leq \lambda_j(\Delta_D),
	\end{equation}
	where the constant \(C_{\Omega}\) is not explicit (see \cite{SafarovFilonov,freitas2024growinggapdirichletneumann}). The de Rham complex introduces large binomial coefficients into the estimates that we hope can provide insight into the conjectural extra shift \eqref{eq:conjecture} for general domains.

The techniques used in proving inequalities for the Laplacian have been adapted to other differential operators. Frank, Helffer and Laptev \cite{LaptevFrank, frank2024inequalitiesdirichletneumanneigenvalues} adapted such ideas to prove a similar inequality for the sub-Laplacian on an open set of a Carnot group, which in particular covers the Heisenberg group. Another example given by Mazzeo \cite{Mazzeo} is the adaptation of Friedlander's ideas to prove the same inequality for certain manifolds, e.g. for all symmetric spaces of noncompact type. However, for manifolds there are cases where the inequality \eqref{eq:fried} does not hold, for example any spherical cap larger than a hemisphere \cite{Mazzeo}. Recently, Lotoreichik explored these inequalities for the magnetic Laplacian with the homogeneous magnetic field in two and three dimensions \cite{lotoreichik2024inequalitiesdirichletneumanneigenvalues}. Similar inequalities have also been proven for Schr\"odinger operators \(-\Delta+V\) under convexity assumptions and further restrictions on the potential \cite{RohlederSchrodinger}, and between the eigenvalues of a curl curl operator and the Dirichlet Laplacian \cite{RohlederCurlCurl}.

The paper is organized as follows. In \autoref{sec:Preliminaries} we give a brief summary of the main tools we need in order to introduce the de Rham complex and prove the main results. We recall the de Rham complex on a manifold with boundary as well as the relevant technical results thereon in \autoref{sec:derahm}. In \autoref{sec:SpectralPropdeRham} we rephrase the results of \cite{rohlfr} in general dimension using the de Rham complex. We compare our methods to Rohleder's work \cite{rohlfr,RohlederCurlCurl} in dimension 2 and 3 in \autoref{sec:Rohleder_language} where we provide a short proof for Rohleder's inequality \eqref{eq:rohlineq}.

There is a recent preprint \cite{mikhail2025hodgelaplacianeigenvaluessurfacesboundary} with results overlapping part of the results in this paper. In particular Theorem 1.5 in \cite{mikhail2025hodgelaplacianeigenvaluessurfacesboundary} can be obtained as a consequence of \autoref{lemma:rohledergeneralization} for smooth domains in a similar way as we did for dimensions 2 and 3 in \autoref{sec:Rohleder_language}. In both cases, differential forms are used to obtain the results, but the techniques are different. A previous version of this work, published as \cite{FGMFriedlanderreftoJST}, contained an erroneous claim that has been removed from this version.

\section{Preliminaries}
\label{sec:Preliminaries}

Let \(T\) be a positive, self-adjoint operator with discrete spectrum (i.e. the spectrum consists of isolated eigenvalues of finite multiplicity). We denote by \(\lambda_j(T)\) the \(j\):th eigenvalue of \(T\) ordered increasingly counting multiplicity. We write the counting function of \(T\) as
\[
	N(T,\lambda) := \# \setcond{j}{\lambda_j(T)\leq \lambda}. 
\]
We let
\[
	m(T, \lambda) := \dim \ker (T - \lambda),
\]
denote the multiplicity of an eigenvalue (or \(0\) if \(\lambda\) is not an eigenvalue). Note that
\begin{equation}
\label{multiident}
	j \leq N(T, \lambda_j (T)) \leq j + m(T, \lambda_j(T)) - 1.  
\end{equation}

\subsection{Variational principle}
The results in \cite{Filonov} and \cite{rohlfr} rely on a variational principle, which will also be useful in our approach using the de Rham complex. Because of this, we recall Glazman's lemma describing the counting function by means of finite-dimensional subspace of the form domain.

\begin{lemma}[Glazman's lemma]\label{lemma:Glazman}
	Let \(T\) be a positive self-adjoint operator with discrete spectrum acting on a Hilbert space \((\mathcal{H},\braket{\cdot, \cdot})\) and \(q_T\) the quadratic form associated with \(T\). Then
\[
	N(T, \lambda) = \max_{\substack{V\subseteq \Dom(q_T) \\ R[u]\leq \lambda \forall u\in V\setminus \{0\}}} \dim V,
\]
where \(R[u] = \frac{q_T(u)}{\braket{ u, u}}\) is the so-called Rayleigh quotient. 
\end{lemma}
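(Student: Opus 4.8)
The plan is to prove the two inequalities separately, exploiting the orthonormal basis of eigenfunctions furnished by the discreteness of the spectrum. Write $N := N(T,\lambda)$ and let $\{u_k\}_{k\geq 1}$ be an orthonormal basis of $\mathcal{H}$ consisting of eigenfunctions, $T u_k = \lambda_k(T) u_k$, with eigenvalues ordered increasingly. I would use throughout that the form domain is $\Dom(q_T) = \setcond{u = \sum_k c_k u_k}{\sum_k (1+\lambda_k(T))\abs{c_k}^2 < \infty}$ and that $q_T(u) = \sum_k \lambda_k(T)\abs{c_k}^2$ on it, both of which follow from the spectral theorem.

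For the bound $\max \geq N$, I would take $V := \Span\set{u_1, \ldots, u_N}$, a subspace of $\Dom(q_T)$ of dimension $N$. For $u = \sum_{k=1}^N c_k u_k \in V \setminus \set{0}$, orthonormality gives $q_T(u) = \sum_{k=1}^N \lambda_k(T)\abs{c_k}^2 \leq \lambda \sum_{k=1}^N \abs{c_k}^2 = \lambda \norm{u}^2$, since $\lambda_k(T)\leq \lambda$ for $k \leq N$ by definition of the counting function. Hence $R[u]\leq \lambda$ on $V\setminus\set{0}$, so $V$ is admissible and the maximum is at least $N$.

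For the reverse inequality, let $V \subseteq \Dom(q_T)$ be any admissible subspace, i.e. $R[u]\leq\lambda$ for all $u\in V\setminus\set{0}$; I claim $\dim V \leq N$. Let $P := \mathbf{1}_{[0,\lambda]}(T)$ be the spectral projection onto the span of eigenfunctions with eigenvalue at most $\lambda$, so that $\Ran P$ has dimension exactly $N$. Consider the bounded linear map $P|_V \colon V \to \Ran P$. If $\dim V > N = \dim \Ran P$, then $P|_V$ has nontrivial kernel, so there is $u \in V\setminus\set{0}$ with $Pu = 0$, i.e. $u$ lies in the closed span of the eigenfunctions with eigenvalue strictly greater than $\lambda$. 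For such $u \in \Dom(q_T)$ the spectral representation gives $q_T(u) = \sum_{k \,:\, \lambda_k(T) > \lambda} \lambda_k(T)\abs{c_k}^2 > \lambda \norm{u}^2$, because every eigenvalue appearing is strictly larger than $\lambda$ and $u\neq 0$. This yields $R[u] > \lambda$, contradicting admissibility of $V$. Therefore $\dim V \leq N$, and combining the two bounds gives the claimed equality.

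The argument is essentially a finite-dimensional pigeonhole (the kernel of $P|_V$ is nontrivial once $\dim V$ exceeds $\dim \Ran P$) packaged with the spectral theorem. The only point requiring genuine care is the strict inequality $q_T(u) > \lambda\norm{u}^2$ on the high-frequency part: this is where the membership $u \in \Dom(q_T)$ is used, so that the quadratic form coincides with its spectral sum, and where discreteness of the spectrum guarantees the sum ranges only over eigenvalues strictly exceeding $\lambda$. I expect this verification, rather than either dimension count, to be the main technical point, though it is standard.
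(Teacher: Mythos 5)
Your proof is correct; the paper itself does not prove this lemma but defers to \cite[Proposition 9.5]{ShubinInvitation}, and your argument is the standard one found there: the span of the first $N(T,\lambda)$ eigenfunctions gives the lower bound, and the rank-$N(T,\lambda)$ spectral projection $\mathbf{1}_{[0,\lambda]}(T)$ together with the pigeonhole principle gives the upper bound. You correctly identify the one point needing care, namely the strict inequality $q_T(u)>\lambda\norm{u}^2$ for a nonzero $u\in\Dom(q_T)\cap\ker P$, which uses that $u\neq 0$ forces at least one strictly positive term in $\sum_k(\lambda_k(T)-\lambda)\abs{c_k}^2$; discreteness of the spectrum is what makes $\Ran P$ finite-dimensional and the eigenfunctions a complete orthonormal system, so nothing is missing.
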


See \cite[Proposition 9.5]{ShubinInvitation} for a proof of the Glazman's lemma. Hence, if we have a finite-dimensional subspace \(V\subseteq \Dom(q_T)\) such that 
\[ q_T(u)\leq \lambda \norm{u}^2, \]
for \(u\in V\), then using Glazman's lemma we obtain
\[ N(T,\lambda) \geq \dim V. \]
We will also use the notation \(N(q_T,\lambda) :=  N(T, \lambda)\). We use the notation \(T:\mathcal{H}\dashrightarrow \mathcal{H}'\) for a densely defined operator between two Hilbert spaces. A fact we use is that if there is a closed densely defined operator $t: \mathcal{H}\dashrightarrow \mathcal{H}'$ for
some Hilbert space $\mathcal{H}'$ such that $T = t^*t$, then $\Dom(q_T) = \Dom(t)$ and $q_T(u,v) = \langle tu,tv\rangle$.

\subsection{Hilbert Complexes}
\label{subsec:deRhamComplex}

A key aspect in our study of inequalities between Dirichlet and Neumann Laplacian eigenvalues will be the usage of the de Rham complex on a domain in \(\R^n\) with appropriate boundary conditions. It is helpful to set this in an abstract framework, so we first recall the notion of a Hilbert complex. We follow the presentation of \cite{leschbruening} and refer the reader there for further details. Below in \autoref{sec:derahm} we specialize to de Rham complexes. 

\begin{definition}
A Hilbert complex written as \((\mathcal{H}_\bullet,T_\bullet)\) or 
\[ 0\to \mathcal{H}_0\xrightarrow{T_0}\mathcal{H}_1\xrightarrow{T_1}\cdots \mathcal{H}_{d-1}\xrightarrow{T_{d-1}}\mathcal{H}_d\to 0, \]
consists of Hilbert spaces \(\mathcal{H}_0,\mathcal{H}_1,\ldots, \mathcal{H}_d\) and closed, densely defined maps \(T_k:\mathcal{H}_{k}\dashrightarrow \mathcal{H}_{k+1}\) with the property that 
\[ \Ran(T_{k-1})\subseteq \ker(T_k). \]
In other words, \(\Ran(T_{k-1})\subseteq \Dom(T_k)\) and \(T_kT_{k-1}=0\).

We say that \((\mathcal{H}_\bullet,T_\bullet)\) is Fredholm if the cohomology groups 
\[ H^k(\mathcal{H}_\bullet,T_\bullet) := \ker(T_k)/\Ran(T_{k-1}), \]
are finite-dimensional. The Euler characteristic of a Fredholm Hilbert complex \((\mathcal{H}_\bullet,T_\bullet)\) is defined as 
\begin{equation}
\label{eulerdefe}
	\chi(\mathcal{H}_\bullet,T_\bullet) := \sum_{k=0}^d (-1)^k \dim H^k(\mathcal{H}_\bullet,T_\bullet).
\end{equation}
We say that \((\mathcal{H}_\bullet,T_\bullet)\) has discrete spectrum if for any \(k\) the densely defined, self-adjoint Laplacians
\[
	\Delta_{k,T_\bullet} := T_k^*T_k+T_{k-1}T_{k-1}^*:\mathcal{H}_k\dashrightarrow\mathcal{H}_k,
\]
have discrete spectrum.
\end{definition}

We note it is a stronger assumption to have discrete spectrum than being Fredholm. Moreover, each operator \(T_k\) has closed range if the Hilbert complex \((\mathcal{H}_\bullet,T_\bullet)\) is Fredholm. 
Note also that \(\Delta_{k,T_\bullet} = (T_k + T_{k-1}^*)^*(T_k + T_{k-1}^*)\) and hence \(\Dom q_{\Delta_{k,T_\bullet}} = \Dom T_k \cap \Dom T_{k-1}^*\).
\\

We will utilize Hilbert complexes \((\mathcal{H}_\bullet, T_\bullet)\) in order to compare the spectrum of the bottom Laplacian \(\Delta_{0,T_\bullet}=T_0^*T_0\) and the top Laplacian \(\Delta_{d,T_\bullet} := T_dT_d^*\). We shall see below that for the de Rham complex with an appropriate boundary condition, \(\Delta_{0,T_\bullet}\) is the Neumann realization of the Laplacian and \(\Delta_{d,T_\bullet}\) is up to the Hodge star the Dirichlet realization of the Laplacian. Assuming that \((\mathcal{H}_\bullet, T_\bullet)\) is Fredholm, we have the Hodge decomposition
\begin{equation}
\label{hodgedecomp}
 \mathcal{H}_k = \ker(\Delta_{k,T_\bullet})\oplus \Ran(T_{k}^*) \oplus \Ran (T_{k-1}). 
 \end{equation}
In particular, if \((\mathcal{H}_\bullet, T_\bullet)\) in fact has discrete spectrum we can deduce that
\begin{equation}
\label{eq:hodgrealref}
N(\Delta_{k,T_\bullet}, \lambda) = \dim \ker(\Delta_{k,T_\bullet})+ N(T_k^*T_k;(0, \lambda])+ N(T_{k-1}T_{k-1}^*; (0,\lambda]).
\end{equation}
Here we use the notation \(N(T; (0,\lambda])\) for the number of eigenvalues of a self-adjoint operator \(T\) in the interval \((0,\lambda]\). Combining such terms in an alternating sum, and using that \(T_k^*T_k\) and \(T_kT_k^*\) has the same non-zero spectrum including multiplicities, we conclude the following lemma. 

\begin{lemma}
\label{thm:complex_alternating_sum}
Assume that \((\mathcal{H}_\bullet, T_\bullet)\) is a Hilbert complex with discrete spectrum. Then for any \(\lambda\geq 0\), we have an equality
\[ \sum_{k=0}^{d} (-1)^k N(\Delta_{k,T_\bullet}, \lambda)= \chi(\mathcal{H}_\bullet,T_\bullet). \]
\end{lemma}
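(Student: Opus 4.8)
The plan is to feed the Hodge-theoretic identity \eqref{eq:hodgrealref} into the alternating sum and watch the spectral-counting contributions telescope away, leaving only the harmonic part, which is exactly \(\chi(\mathcal{H}_\bullet,T_\bullet)\). Concretely, for each \(k\) I would write
\[
N(\Delta_{k,T_\bullet}, \lambda) = \dim\ker(\Delta_{k,T_\bullet}) + N(T_k^*T_k; (0,\lambda]) + N(T_{k-1}T_{k-1}^*; (0,\lambda]),
\]
multiply by \((-1)^k\) and sum over \(k=0,\ldots,d\). This splits the left-hand side into three alternating sums which I treat separately.

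For the first sum, I would invoke the Hodge decomposition \eqref{hodgedecomp}: restricting it to \(\ker(T_k)\) gives \(\ker(T_k) = \ker(\Delta_{k,T_\bullet})\oplus \Ran(T_{k-1})\), so the quotient defining cohomology satisfies \(H^k(\mathcal{H}_\bullet,T_\bullet)\cong \ker(\Delta_{k,T_\bullet})\). Hence \(\dim\ker(\Delta_{k,T_\bullet}) = \dim H^k(\mathcal{H}_\bullet,T_\bullet)\), and \(\sum_k (-1)^k \dim\ker(\Delta_{k,T_\bullet}) = \chi(\mathcal{H}_\bullet,T_\bullet)\) by definition \eqref{eulerdefe}.

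It remains to show the two remaining sums cancel. Reindexing the sum containing \(N(T_{k-1}T_{k-1}^*;(0,\lambda])\) by \(j=k-1\) shifts the sign by one and turns the summand into \(N(T_jT_j^*;(0,\lambda])\); the boundary contributions drop out because \(T_{-1}=0\) and \(T_d=0\), so that both \(N(T_{-1}T_{-1}^*;(0,\lambda])\) and \(N(T_d^*T_d;(0,\lambda])\) vanish. Thus the two sums combine into
\[
\sum_{j=0}^{d-1} (-1)^j \left( N(T_j^*T_j;(0,\lambda]) - N(T_jT_j^*;(0,\lambda]) \right),
\]
and each summand vanishes once we know that \(T_j^*T_j\) and \(T_jT_j^*\) have the same positive spectrum with multiplicities.

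I expect the one genuine point requiring care to be precisely this isospectrality for the (generally unbounded) operators \(T_j^*T_j\) and \(T_jT_j^*\). The argument I would give is the standard one: if \(T_j^*T_j u = \mu u\) with \(\mu>0\), then \(u\in \Dom(T_j)\) and \(T_j u\neq 0\) since \(\norm{T_j u}^2 = \mu\norm{u}^2\); moreover \(T_j u \in \Dom(T_j^*)\) with \(T_jT_j^*(T_j u) = \mu\, T_j u\), so \(u\mapsto T_j u\) maps the \(\mu\)-eigenspace of \(T_j^*T_j\) injectively into that of \(T_jT_j^*\), and symmetrically \(v\mapsto T_j^* v\) provides the inverse up to the scalar \(\mu\). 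This gives an isomorphism of the corresponding eigenspaces, hence equality of the counting functions on \((0,\lambda]\), so the two sums cancel and the asserted identity follows.
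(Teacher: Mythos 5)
Your proof is correct and follows exactly the route the paper takes (its proof is compressed into the sentence preceding the lemma): substitute \eqref{eq:hodgrealref} into the alternating sum, identify \(\ker(\Delta_{k,T_\bullet})\) with \(H^k(\mathcal{H}_\bullet,T_\bullet)\) via the Hodge decomposition \eqref{hodgedecomp}, and cancel the remaining terms using that \(T_k^*T_k\) and \(T_kT_k^*\) share the same non-zero spectrum with multiplicities. Your explicit verification of this isospectrality for the unbounded operators is a detail the paper leaves implicit, and you carry it out correctly.
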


\section{The de Rham complex}\label{sec:derahm}

We now turn to studying the de Rham complex with boundary conditions. The material in this section is well known and can be found in the literature \cite{leschbruening,gilkeyinv}. The reader uninitiated in differential forms and the de Rham complex can consult \cite{warnerbook} for more details, or the more elementary text \cite{guilleminhaine}. We take a smooth, oriented, compact manifold with Lipschitz boundary \(M\), or in other words, \(M\) is a precompact domain with Lipschitz boundary in a smooth, oriented manifold. We write \(d\) for the dimension of \(M\). To carry out spectral geometry, we need to choose a Riemannian metric \(g\) on \(M\). We denote the Riemannian volume form by \(\rd V\) and the Riemannian volume density by \(\rd x\). The volume density will only be used as a measure in integrals so its difference to differentials such as \(\rd x_j\) will be clear. We write \(T^*M\to M\) for the cotangent bundle on \(M\). Abusing the notation, we write \(\wedge^kT^*M\to M\) for the complexified bundle of degree \(k\)-forms on \(M\) and \(\wedge^*T^*M := \oplus_{k=0}^d\wedge^kT^*M\) for the bundle of all complexified forms on \(M\). 

For our application to the Friedlander's inequality \eqref{eq:fried}, we consider Lipschitz domains in \(\R^d\) with the Euclidean metric. {\bf For notational clarity, we reserve the letter \(\Omega\) for domains in \(\R^d\) and \(M\) for general manifolds.} If \(\Omega\) is a domain in \(\R^d\), the basis vectors of \(\R^d\) defines a frame and trivializations \(\wedge^kT^*\Omega\cong \Omega\times \wedge^k \C^d\).

The exterior differential between differential forms is a well studied differential operator. We write the exterior differential on forms of degree \(k\) as
\begin{equation}
\label{eq:exterior_differential}
	\rd_k:C^\infty(M,\wedge^k T^*M)\to C^\infty(M,\wedge^{k+1} T^*M)
\end{equation}
and the exterior differential on all forms as \(\rd:C^\infty(M,\wedge^* T^*M)\to C^\infty(M,\wedge^* T^*M)\). We also write \(\delta:C^\infty(M,\wedge^* T^*M)\to C^\infty(M,\wedge^* T^*M)\) for the formal adjoint of \(\rd\,\), decomposing over the form degrees into 
\begin{equation}\label{eq:delta_def}
\delta_k:C^\infty(M,\wedge^{k+1} T^*M)\to C^\infty(M,\wedge^{k} T^*M).
\end{equation}
The operator \(\delta_k\) takes the form 
\[ \delta_k=(-1)^{k+1}\star^{-1}\rd_{d-k-1} \star, \] 
where \(\star\) denotes the Hodge star. The Hodge star \(\star\omega\) of a \(k\)-form \(\omega\) is the \(n-k\)-form with the property that for any real \(k\)-form \(\omega'\),
\[ \omega'\wedge \star\omega=\braket{ \omega',\omega}_{\wedge^kT^*M}\rd V, \] 
where \(\braket{ \cdot,\cdot}_{\wedge^kT^*M}\) denotes the inner product on \(k\)-forms and \(\rd V\) denotes the Riemannian volume form. The operator \(\slashed{D} := \rd+\delta\) is an elliptic first order differential operator, called the Hodge-Dirac operator, and \(\slashed{D}^2\) coincides with the Hodge Laplacian on forms, see \cite{gilkeyinv}.

So far the discussion has only been concerned with differential expressions. Now we turn to realizations of these operators on \(L^2\)-spaces. We consider the Hilbert spaces
\[ \mathcal{H}_k := L^2(M;\wedge^k T^*M), \quad k=0,1,2,\ldots,d. \]
There are several ways to realize the exterior differential and its adjoint on \(\mathcal{H}_k\) as closed operators, notably via the \emph{ideal boundary conditions} defined as those boundary conditions ensuring that we obtain a Hilbert complex. Such boundary conditions are discussed in detail in \cite{leschbruening}. We will only use the so-called maximal realization but for completeness we also discuss the minimal realization. 
\begin{itemize}
\item We shall write \(\rd_{k,{\max}}\) for the maximal realization of \(\rd_k\), i.e. so \(u\in \Dom( \rd_{k,{\max}})\) if and only if \(u\in L^2(M;\wedge^k T^*M)\) satisfies that \(\rd u\in L^2(M;\wedge^{k+1}T^*M)\) where the exterior differential is applied in a distributional sense. The reader should beware that, for \(k>0\) the domain of \(\rd_{k,{\max}}\) is substantially larger than \(H^1(M;\wedge ^k T^*M)\). It follows from \cite[Section 4]{leschbruening} that \(H^1(M;\wedge ^k T^*M)\) is a core for \(\rd_{k,{\max}}\).
\item The minimal realization \(\rd_{k,{\min}}\) of \(\rd_k\), i.e. the graph closure of \(\rd_k\) acting on \(C^\infty_c(M^\circ, \wedge^k T^*M)\). Also in this case, the reader should be aware that for \(k<d\) the domain of \(\rd_{k,{\min}}\) is larger than \(H^1_0(M;\wedge^k T^*M)\) even if it follows from \cite[Section 4]{leschbruening} that \(H^1_0(M;\wedge^k T^*M)\) forms a core for the operator. 
\end{itemize}

Unless otherwise state, we use the maximal realization. We use the notation
\[ \Delta_{k,a} := \rd_{k,{\max}}^*\rd_{k,{\max}}+\rd_{k-1,{\max}}\rd_{k-1,{\max}}^*. \]
The index $a$ refers to its defining boundary condition which is called the \emph{absolute boundary condition}, it is called so for reasons that will become apparent in \autoref{absolueteapp} and \autoref{lkjnkjnkjanda} below. Note that \(\rd_{k-1,{\max}}^*\) is the minimal realization of \(\delta_{k-1}\). We make the following observations from quadratic form considerations. We have that 
\[ \Delta_{0,a}=\rd_{0,{\max}}^*\rd_{0,{\max}}=\Delta_N, \]
is defined from the quadratic form with domain $H^1(M)$ so it is the Neumann realization of the Hodge Laplacian on 0-forms. We have that 
\[ \Delta_{d,a}=\rd_{d,{\max}}\rd_{d,{\max}}^* = \Delta_D, \]
is defined from the quadratic form with domain $H^1_0(M,\wedge^dT^*M)$ so it is the Dirichlet realization of the Hodge Laplacian on \(d\)-forms. Indeed, the Hodge star \(L^2(M)\to L^2(M;\wedge^d T^*M)\), \(f\mapsto f\rd V\) implements a canonical identification of \(\Delta_{d,a}\) with the Dirichlet realization of the Laplacian on \(0\)-forms. Below in \autoref{thm:q_ka} we will see that the domain of \(\Delta_{k,a}\) is contained in \(H^1(M;\wedge^k T^*M)\), so by the Rellich theorem we obtain that the Hilbert complex \((L^2(M;\wedge^\bullet T^*M),\rd_\bullet)\) has discrete spectrum as soon as \(M\) is a compact manifold with Lipschitz boundary.

The operator \(\Delta_{k,a}\) is a realization of the Hodge Laplacian on \(k\)-forms, and the realization is described by a boundary condition. Let us clarify the boundary condition defining \(\Delta_{k,a}\) for \(0<k<d\) and describe their form domains. We do so using the Hodge--Dirac operator \(\slashed{D}=\rd+\delta\) and the results from \cite{leschbruening}. The results in  \cite{leschbruening} are described for smooth manifolds with boundary, but using \cite{MR0799572,MR0720931} the results extend ad verbatim to Lipschitz manifolds with boundary. To state the results, we need further notation. Write \(x_n\) for the inwards pointing normal coordinate near the boundary. For a \(k\)-form \(\omega\) we can near the boundary write 
\begin{equation}
\label{lkjnklajnda}
	\omega=\omega_1+\rd x_{ n}\wedge \omega_2,
\end{equation}
where \(\omega_1\) and \(\omega_2\) are defined near the boundary and take values in \(\wedge^kT^*\partial M\) and \(\wedge^{k-1}T^*\partial M\), respectively. In other words, \eqref{lkjnklajnda} uniquely decomposes $\omega$ into components \(\omega_1\) and \(\omega_2\) not containing $\rd x_{n}$. Following \cite[Section 2.7.1]{gilkeyinv} we can define the relative boundary condition \(B_r\) and the absolute boundary condition \(B_a\) by 
\[
	B_r \omega := \omega_1\lvert_{\partial M} =0 \text{ and } B_a \omega := \omega_2\lvert_{\partial M} =0,
\]

\begin{thm}
\label{thm:domain_of_absolute_bc_hodgedirac}
Let \(M\) be a compact Lipschitz manifold with boundary. The operator \(D_a := \rd_{\max}+\rd_{\max}^*\) is a self-adjoint realization of the Hodge-Dirac operator \(\slashed{D}=\rd+\delta\) with domain contained in the Sobolev space \(H^1(M;\wedge^*T^*M)\). In fact, 
\[ \Dom(D_a) := \setcond{u\in H^1(M;\wedge^*T^*M)}{B_au=0}, \]
and in the special case that \(M\) is a smooth manifold with smooth boundary then \(D_a\) is a Shapiro-Lopatinski elliptic boundary value problem.
\end{thm}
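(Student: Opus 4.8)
The plan is to prove that the maximal Hodge–Dirac operator $D_a = \rd_{\max} + \rd_{\max}^*$ is self-adjoint, has domain equal to $\{u \in H^1(M;\wedge^*T^*M) : B_a u = 0\}$, and in the smooth case defines a Shapiro–Lopatinski elliptic boundary value problem. I would organize the argument in three stages: first self-adjointness, then the identification of the domain, then the ellipticity of the boundary condition.

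\emph{Self-adjointness.} First I would establish that $D_a$ is self-adjoint. The operator $\slashed{D} = \rd + \delta$ is formally self-adjoint since $\delta$ is the formal adjoint of $\rd$. Taking the maximal realization $\rd_{\max}$ of $\rd$ and noting that $\rd_{\max}^*$ is the minimal realization of $\delta$, one checks that $D_a = \rd_{\max} + \rd_{\max}^*$ is symmetric: this follows from the definition of adjoints together with the fact that $\rd_{\max}^* = (\rd_{\max})^*$, so that $\langle D_a u, v\rangle = \langle u, D_a v\rangle$ on the common domain $\Dom(\rd_{\max}) \cap \Dom(\rd_{\max}^*)$. To upgrade symmetry to self-adjointness I would invoke the abstract Hilbert-complex machinery of \cite{leschbruening}: for the maximal (ideal) boundary condition the associated Laplacians $\Delta_{k,a} = D_a^2$ are self-adjoint by construction, and $D_a$ is the self-adjoint operator whose square is the direct sum of these. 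Equivalently, one verifies that $D_a$ is closed and that $\Ran(D_a \pm i)$ is all of $\mathcal{H}$, which reduces to solvability of $(\slashed{D} \pm i)u = f$ with the absolute boundary condition.

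\emph{Domain identification.} The crux is showing $\Dom(D_a) = \{u \in H^1 : B_a u = 0\}$; this is where the $H^1$-regularity must be earned. In the smooth case I would argue as follows. For $u \in \Dom(\rd_{\max}) \cap \Dom(\rd_{\max}^*)$, Green's formula for $\rd$ and $\delta$ produces a boundary term that vanishes precisely when the normal component $\omega_2|_{\partial M}$ is absent, i.e. when $B_a u = 0$; this pins down the boundary condition. The interior elliptic regularity of $\slashed{D}$ gives $H^1_{\mathrm{loc}}$ control, and boundary regularity for $\slashed{D}$ under $B_a$ promotes this to global $H^1$ up to the boundary, using the elliptic estimate for the boundary value problem. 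The passage to Lipschitz boundaries is then handled by the results of \cite{MR0799572,MR0720931}, which the excerpt tells us allow the smooth arguments to extend \emph{ad verbatim}; concretely, these references supply the needed regularity and trace theory on Lipschitz domains that replace the smooth boundary charts.

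\emph{Ellipticity.} Finally, for smooth $M$ I would verify the Shapiro–Lopatinski condition for $(\slashed{D}, B_a)$. Freezing coefficients at a boundary point and passing to the half-space model, one examines the ODE system $(\sigma(\slashed{D}) )\hat u = 0$ in the inward normal variable, obtained from the principal symbol of $\slashed{D}$ with the tangential covariable fixed; the SL condition requires that for each nonzero tangential covector the space of exponentially decaying solutions maps isomorphically onto the boundary data space cut out by $B_a$. Since $\slashed{D}$ is a Dirac-type operator its symbol squares to $|\xi|^2$, so the decaying solution space has exactly half the fibre dimension, matching the rank of the absolute boundary projection $B_a$; the required isomorphism is then a direct symbolic computation. \textbf{The main obstacle} I expect is the domain identification — specifically proving that membership in $\Dom(\rd_{\max}) \cap \Dom(\rd_{\max}^*)$ already forces $H^1$-regularity rather than merely the weaker mapping properties of the maximal domains. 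This is a genuine elliptic regularity statement for the boundary value problem and is exactly the point where one must lean on the analysis of \cite{leschbruening} and its Lipschitz extension via \cite{MR0799572,MR0720931}.
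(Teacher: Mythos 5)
Your overall architecture (self-adjointness via the Hilbert-complex machinery, domain identification via Green's formula plus elliptic regularity, and a symbol computation for the Shapiro--Lopatinski condition) is a genuinely different route from the one the paper takes, and in the Lipschitz case it has a real gap at exactly the point you flag as the main obstacle. The paper does not attack the boundary value problem directly: following \cite{leschbruening}, it passes to the doubled manifold \(\tilde{M}=2M\) with the isometric involutive Lipeomorphism \(\alpha\), where the de Rham complex on the \emph{closed} manifold \(\tilde{M}\) has a unique ideal boundary condition, and identifies the restriction to \(M\) of the \(+1\)-eigenspace of \(\alpha^*\) with the maximal realization \((H_\bullet,\rd_{\bullet,\max})\). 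The \(H^1\)-regularity of \(\Dom(D_a)\) then comes from \emph{interior} elliptic regularity on the closed double, where \cite{MR0799572,MR0720931} supply the regularity theory needed for the merely Lipschitz metric, and the boundary condition \(B_au=0\) drops out of the \(\alpha^*\)-invariance. This reflection device is precisely what makes the Lipschitz case tractable, and what buys the theorem its stated generality.

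By contrast, your domain identification leans on ``boundary regularity for \(\slashed{D}\) under \(B_a\),'' i.e. on elliptic estimates for the Shapiro--Lopatinski problem. Two concrete problems. First, for \(u\) merely in \(\Dom(\rd_{\max})\cap\Dom(\rd_{\max}^*)\) the trace \(B_au\) is not yet defined in a classical sense (the paper explicitly warns that the maximal domains are strictly larger than \(H^1\)), so Green's formula only yields a weak, distributional boundary pairing; showing that this weak condition upgrades to \(u\in H^1\) with \(B_au=0\) in the trace sense is the entire content of the theorem and cannot be cited as ``boundary regularity'' without circularity --- in the smooth case this can be repaired by the standard regularity theory for Dirac-type boundary value problems, but it must be invoked as such, not assumed. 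Second, Shapiro--Lopatinski theory is not available for Lipschitz boundaries, so your proposed mechanism cannot cover the stated generality; the references \cite{MR0799572,MR0720931} are used in the paper to handle Lipschitz metrics on the closed double, not to provide a boundary regularity theory on Lipschitz domains. Your symbol-level verification of the SL condition in the smooth case (decaying-solution space of half the fibre dimension matching the rank of the absolute projection) is fine and consistent with the final assertion of the theorem.
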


The reader can find more details about Shapiro-Lopatinski elliptic boundary value problems in \cite{MR1481215}. We refer the reader to \cite[Theorem 4.1.1]{leschbruening} for a proof of \autoref{thm:domain_of_absolute_bc_hodgedirac}. But to give the reader a feeling for the argument, we recall its salient features. The main idea is to go to the doubled Lipschitz manifold \(\tilde{M} := 2M\) and let \(\alpha:\tilde{M}\to \tilde{M}\) denote the flip map which is an involutive Lipeomorphism; in \cite{leschbruening} they remain within the smooth category. We equip \(\tilde{M}\) with the Riemannian structure making \(\alpha\) isometric. Now as in \cite{leschbruening}, the Hilbert complex \((\tilde{H}_\bullet,\tilde{\rd}_{\,\bullet})\) defined from the de Rham complex on \(\tilde{M}\) has only one ideal boundary condition (the minimal and maximal realization coincides). We decompose into the \(\pm1\)-eigenspaces for \(\alpha^*\) as 
\[ (\tilde{H}_\bullet,\tilde{\rd}_{\,\bullet})=\underbrace{(\tilde{H}_\bullet^a,\tilde{\rd}_{\,\bullet}^a)}_{=\ker(\alpha^*-1)}\oplus \underbrace{(\tilde{H}_{\,\bullet}^r,\tilde{\rd}_{\,\bullet}^r)}_{=\ker(\alpha^*+1)}. \]
As in \cite{leschbruening}, one proves that 
\[ (\tilde{H}_\bullet^a,\tilde{\rd}_{\,\bullet}^{\,a})|_M=(H_\bullet,\rd_{\bullet,{\max}}) \quad\mbox{and}\quad  (\tilde{H}_{\,\bullet}^r,\tilde{\rd}_{\, \bullet}^{\,r})|_M=(H_\bullet,\rd_{\bullet,{\min}}) . \]
From here, the proof proceeds as in \cite{leschbruening} to show that \(\rd_{\max}+\rd_{\max}^*\) is the realization defined from the boundary condition \(B_a\). We note here that the relative boundary condition \(B_r\) arises in the same way but from the minimal realization \(\rd_{\min}\).\\

By construction, we have that 
\[ D_a^2=\oplus_{k=0}^d \Delta_{k,a}, \]
so we can describe the form domain of \(\Delta_{k,a}\) rather easily using \autoref{thm:domain_of_absolute_bc_hodgedirac}. 

\begin{lemma}
\label{thm:q_ka}
The quadratic form \(q_{k,a}\) associated with \(\Delta_{k,a}\) takes the form 
\[
	q_{k,a}(\omega) := \int_{M}(|\rd_k\omega|^2+|\delta_{k-1}\omega|^2)\rd x,
\]
and its domain is given by 
\[ \Dom(q_{k,a}) := \Dom(\rd_{k,{\max}})\cap \Dom(\rd_{k-1,{\max}}^*)\equiv \setcond{u\in H^1(M;\wedge^kT^*M)}{B_au=0}.  \]
\end{lemma}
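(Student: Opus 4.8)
The plan is to deduce the lemma from \autoref{thm:domain_of_absolute_bc_hodgedirac} together with the abstract quadratic-form facts recorded in \autoref{subsec:deRhamComplex} and after Glazman's lemma. The starting point is the identity \(\Delta_{k,a} = (\rd_{k,{\max}} + \rd_{k-1,{\max}}^*)^*(\rd_{k,{\max}} + \rd_{k-1,{\max}}^*)\), which is the de Rham instance of the general observation \(\Delta_{k,T_\bullet} = (T_k + T_{k-1}^*)^*(T_k + T_{k-1}^*)\). Applying the fact that \(T = t^*t\) with \(t\) closed and densely defined implies \(\Dom(q_T) = \Dom(t)\) and \(q_T(u,v) = \braket{tu, tv}\), with \(t = \rd_{k,{\max}} + \rd_{k-1,{\max}}^*\), immediately gives \(\Dom(q_{k,a}) = \Dom(\rd_{k,{\max}}) \cap \Dom(\rd_{k-1,{\max}}^*)\) together with the value \(q_{k,a}(\omega) = \norm{\rd_k\omega}^2 + \norm{\delta_{k-1}\omega}^2\). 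Here one uses that \(\rd_k\omega\) and \(\delta_{k-1}\omega\) live in different form degrees and are therefore orthogonal, so the cross terms drop out and \(q_{k,a}\) becomes the stated integral. This settles the first equality in the displayed domain and the expression for the form.

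It then remains to identify \(\Dom(\rd_{k,{\max}}) \cap \Dom(\rd_{k-1,{\max}}^*)\) with \(\setcond{u \in H^1(M;\wedge^k T^*M)}{B_a u = 0}\), and this is where I would invoke \autoref{thm:domain_of_absolute_bc_hodgedirac}. The key point is that every operator and every boundary condition in sight respects the grading by form degree. Indeed, \(\rd_{\max}\) restricts on \(\mathcal{H}_k = L^2(M;\wedge^k T^*M)\) to \(\rd_{k,{\max}}\), while its adjoint \(\rd_{\max}^*\), the minimal realization of \(\delta\), restricts on \(\mathcal{H}_k\) to the minimal realization of \(\delta_{k-1}\), which by definition is \(\rd_{k-1,{\max}}^*\); this is visible from \(D_a^2 = \oplus_k \Delta_{k,a}\) and the definition of \(\Delta_{k,a}\). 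Consequently, a homogeneous form \(\omega \in \mathcal{H}_k\) lies in \(\Dom(D_a)\) precisely when \(\omega \in \Dom(\rd_{k,{\max}}) \cap \Dom(\rd_{k-1,{\max}}^*)\), i.e. \(\Dom(D_a) \cap \mathcal{H}_k = \Dom(q_{k,a})\).

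To finish, I would use that the \(H^1\)-condition and the boundary condition \(B_a\) are likewise degree-wise: the decomposition \(\omega = \omega_1 + \rd x_n \wedge \omega_2\) of \eqref{lkjnklajnda} preserves form degree, so \(B_a\omega = 0\) holds iff it holds in each degree, and \(\omega \in H^1(M;\wedge^* T^*M)\) iff each homogeneous component does. Intersecting the description \(\Dom(D_a) = \setcond{u \in H^1(M;\wedge^*T^*M)}{B_a u = 0}\) from \autoref{thm:domain_of_absolute_bc_hodgedirac} with \(\mathcal{H}_k\) then produces exactly \(\setcond{u \in H^1(M;\wedge^k T^*M)}{B_a u = 0}\), and combining this with the previous paragraph yields the second equality of the lemma.

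The main obstacle is not in this bookkeeping but is entirely absorbed into \autoref{thm:domain_of_absolute_bc_hodgedirac}: the nontrivial input is the elliptic regularity assertion that \(\Dom(\rd_{k,{\max}}) \cap \Dom(\rd_{k-1,{\max}}^*)\), which a priori is only a subspace of \(L^2\), actually sits inside \(H^1\) and is cut out there by the single boundary condition \(B_a\). Once that regularity is granted, the argument above is purely formal, the only delicate check being that \(\rd_{\max}^*\) really acts degree-wise as the minimal \(\delta_{k-1}\); as noted, this follows from \(D_a^2 = \oplus_k \Delta_{k,a}\) and the definition of \(\Delta_{k,a}\).
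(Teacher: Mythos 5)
Your proposal is correct and follows essentially the same route the paper takes: the paper derives \autoref{thm:q_ka} directly from the identity \(D_a^2=\oplus_{k=0}^d\Delta_{k,a}\), the description of \(\Dom(D_a)\) in \autoref{thm:domain_of_absolute_bc_hodgedirac}, and the general fact that \(T=t^*t\) gives \(\Dom(q_T)=\Dom(t)\) with \(q_T(u,v)=\braket{tu,tv}\), exactly as you do. The degree-wise bookkeeping you spell out (restriction of \(\rd_{\max}\) and \(\rd_{\max}^*\) to homogeneous components, orthogonality of the cross terms, and the grading-compatibility of \(B_a\)) is precisely what the paper leaves implicit in the phrase ``we can describe the form domain of \(\Delta_{k,a}\) rather easily.''
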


When \(M\) and its boundary are smooth, we can even describe the domain of \(\Delta_{k,a}\).  We define the boundary conditions \(	\mathcal{B}_a\)  for the Hodge Laplacian on \(k\)-forms \(\Delta_k\) as
\[\mathcal{B}_a\omega := (B_a\omega , B_a (\rd+\delta) \omega).\]
The identity \(D_a^2=\oplus_{k=0}^d \Delta_{k,a}\), \autoref{thm:domain_of_absolute_bc_hodgedirac} and elliptic regularity for Shapiro-Lopatinski elliptic boundary value problems implies the following.

\begin{thm}
\label{absolueteapp}
Let \(M\) be a smooth compact manifold with smooth boundary. The operator 
\[ \Delta_{k,a} := \rd_{k,{\max}}^*\rd_{k,{\max}}+\rd_{k-1,{\max}}\rd_{k-1,{\max}}^*, \] 
is a self-adjoint realization of the Hodge Laplacian on \(k\)-forms with domain contained in the Sobolev space \(H^2(M;\wedge^kT^*M)\). In fact, \(\Delta_{k,a}\) is a Shapiro-Lopatinski elliptic boundary value problem and
\[ \Dom(\Delta_{k,a}) := \setcond{u\in H^2(M;\wedge^kT^*M)}{\mathcal{B}_au=0}. \]
\end{thm}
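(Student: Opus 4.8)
The plan is to derive the entire statement from \autoref{thm:domain_of_absolute_bc_hodgedirac}, the identity \(D_a^2=\oplus_{k=0}^d\Delta_{k,a}\), and a single analytic input: elliptic regularity for first-order Shapiro--Lopatinski elliptic boundary value problems. First I would settle self-adjointness. By \autoref{thm:domain_of_absolute_bc_hodgedirac} the operator \(D_a\) is self-adjoint, so the functional calculus produces a self-adjoint operator \(D_a^2\) with \(\Dom(D_a^2)=\setcond{u\in \Dom(D_a)}{D_au\in \Dom(D_a)}\). Although \(\slashed{D}=\rd+\delta\) shifts the form degree by one, its square \(D_a^2\) preserves the grading \(\mathcal H=\oplus_k\mathcal H_k\); moreover the boundary condition \(B_a\) is imposed degreewise, so \(\Dom(D_a)\) is graded and hence the grading reduces \(D_a^2\). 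Each block \(\Delta_{k,a}\) is therefore the restriction of the self-adjoint operator \(D_a^2\) to \(\mathcal H_k\), and as such is self-adjoint.

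Next I would establish the inclusion \(\Dom(\Delta_{k,a})\subseteq\setcond{u\in H^2}{\mathcal B_au=0}\). Unwinding the description of \(\Dom(D_a^2)\) in degree \(k\), an element \(u\in\Dom(\Delta_{k,a})\) is characterised by \(u\in\Dom(D_a)\) and \((\rd+\delta)u\in\Dom(D_a)\). The first condition gives \(u\in H^1(M;\wedge^kT^*M)\) with \(B_au=0\), the second gives \((\rd+\delta)u\in H^1\) with \(B_a(\rd+\delta)u=0\). Here is where the smoothness of \(M\) enters: by \autoref{thm:domain_of_absolute_bc_hodgedirac}, \(D_a\) is a Shapiro--Lopatinski elliptic boundary value problem, and elliptic regularity for such first-order problems upgrades regularity by one order. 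From \(u\in H^1\), \(B_au=0\) and \(D_au=(\rd+\delta)u\in H^1\) one concludes \(u\in H^2\). Consequently \(\mathcal B_au=(B_au,B_a(\rd+\delta)u)=0\), as claimed.

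The reverse inclusion is a direct verification. If \(u\in H^2(M;\wedge^kT^*M)\) with \(\mathcal B_au=0\), then \(u\in H^1\) and \(B_au=0\), so \(u\in\Dom(D_a)\); moreover \((\rd+\delta)u\in H^1\) with \(B_a(\rd+\delta)u=0\), so \((\rd+\delta)u\in\Dom(D_a)\). Hence \(u\in\Dom(D_a^2)\) and, restricting to degree \(k\), \(u\in\Dom(\Delta_{k,a})\). Combined with the regularity gained above, this identifies \(\Delta_{k,a}\) as the realization of the Hodge Laplacian on \(k\)-forms with boundary condition \(\mathcal B_a\), and exhibits \(\Dom(\Delta_{k,a})\) as stated. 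That \(\mathcal B_a\) is itself Shapiro--Lopatinski elliptic for the second-order operator \(\Delta_k\) can be read off at the level of the principal boundary symbol, since it is the ``square'' of the elliptic condition \(B_a\) for \(D_a\) and a composite of SL-elliptic first-order problems is SL-elliptic.

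The only genuinely analytic step is the boundary regularity theorem for Shapiro--Lopatinski problems (see \cite{MR1481215}); everything else is bookkeeping around the grading and the factorization \(\Delta_k=\slashed{D}^2\). I expect the main obstacle to be the careful matching of boundary data: one must check that the two conditions packaged in \(\mathcal B_a\), namely the tangential condition \(B_au=0\) and the Neumann-type condition \(B_a(\rd+\delta)u=0\), are exactly those produced by the iterated requirement \(u,\,D_au\in\Dom(D_a)\), so that the full two-order regularity gain for \(\Delta_k\) is delivered as two successive one-order gains through \(D_a\).
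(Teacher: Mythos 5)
Your proposal is correct and uses exactly the ingredients the paper invokes: the identity \(D_a^2=\oplus_{k=0}^d\Delta_{k,a}\), \autoref{thm:domain_of_absolute_bc_hodgedirac}, and elliptic regularity for Shapiro--Lopatinski problems, with the two-order regularity gain obtained as two successive one-order gains through \(D_a\). The paper leaves these details as a one-sentence assertion, so your write-up is simply a fleshed-out version of the same argument.
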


We here impose the assumption that \(M\) is smooth to ensure that the Sobolev space \(H^2(M;\wedge^kT^*M)\) is well-defined and to be able to employ elliptic regularity. For instance, \autoref{absolueteapp} covers Euclidean domains with smooth boundary. For our considerations, we only need the quadratic form domain (as described in \autoref{thm:q_ka}) in the proofs of eigenvalue inequalities. We note also that \cite[Lemma 2.7.2]{gilkeyinv} ensures that \((\Delta_k, \mathcal{B}_a)\) is self-adjoint from first principles, and not only from that it coincides with our operator \(\Delta_{k,a}\).

Let us verify again that \(\Delta_{0,a}\) coincides with the Neumann realization of the Hodge Laplacian. In degree zero, \(B_a(\rd+\delta)\omega=B_a \rd_0 \omega =\partial_{x_{n}}\omega|_{\partial M}\). So by definition, for \(\omega\in C^\infty(M)=C^\infty(M,\wedge^0T^*M)\), 
\[\mathcal{B}_a \omega = (0, \partial_{x_{n}}\omega|_{\partial M}).\]
In particular, in degree zero, 
\[ \mathcal{B}_a \omega =0\Leftrightarrow   \partial_{x_{n}}\omega|_{\partial M}=0,  \]
which gives the desired  Neumann boundary condition. If one wants to check that \(\Delta_{d,a}\) coincides with the Dirichlet realization, one computes that for \(\omega=\omega_0\rd V\in C^\infty(M,\wedge^dT^*M)\), with \(\omega_0\in C^\infty(M)\) and \(\rd V=\star(1)\) the Riemannian volume form, that
\[\mathcal{B}_a \omega = ((\rd x_{n}\neg \omega)|_{\partial M}, 0), \quad\mbox{so}\quad  \mathcal{B}_a \omega=0\Leftrightarrow   \omega_0|_{\partial M}=0.  \]
Here \(\rd x_{n}\neg \omega\) denotes the contraction of \(\omega\) along the normal covector \(\rd x_{n}\). Moreover, by \cite[Lemma 2.7.1]{gilkeyinv} the Hodge star \(\star\) implements an identification of \(\oplus_{k=0}^d\Delta_{k,a}\) with the corresponding relative/minimal realization \(\oplus_{k=0}^d\Delta_{k,r}\) at the cost of flipping degree \(k\) forms to degree \(d-k\)-forms. Indeed, \(B_a \omega= 0 \Leftrightarrow B_r \star \omega =0\). 

\begin{remark}
\label{lkjnkjnkjanda}
The content of \cite[Theorem 4.1.2]{leschbruening} is precisely that 
\[
	\ker(\Delta_{k,a})\cong H^k(M;\C).
\]
This is the motivation for using the term absolute boundary conditions, since the associated space of harmonic forms realizes the absolute cohomology groups $H^*(M;\C)$. In particular, the definition of Euler characteristic (see Equation \eqref{eulerdefe}) and the Hodge decomposition \eqref{hodgedecomp} implies that
\[ \chi(L^2(M;\wedge^\bullet T^*M),\rd_{\bullet, {\max}})=\chi(M). \]
So, for any \(\lambda\geq 0\), \autoref{thm:complex_alternating_sum} implies
\[ \sum_{k=0}^{d} (-1)^k N(\Delta_{k,a}, \lambda)= \chi(M). \]
If we use the minimal/relative realization, we instead have the equalities
\[ \ker(\Delta_{k,r})\cong H^k(M,\partial M;\C), \quad\mbox{and}\quad \chi(L^2(M;\wedge^\bullet T^*M),\rd_{\bullet, {\min}})=\chi(M,\partial M), \]
where \(H^k(M,\partial M;\C)\) denotes the \(k\):th relative cohomology with respect to the boundary inclusion \(\partial M\hookrightarrow M\) and \(\chi(M,\partial M) := \sum_{k=0}^{d} (-1)^k \dim_\C H^k(M,\partial M;\C)\). This is the motivation for using the term relative boundary conditions, since the associated space of harmonic forms realizes the relative cohomology groups $H^*(M,\partial M;\C)$. 
\end{remark}

\subsection{Some computations on domains in \texorpdfstring{\(\R^d\)}{ Rd}}
\label{subsec:computation_in_Rd}

We are primarily interested in \(\Omega\subseteq \R^d\) being a domain. In this case, we use the standard basis for the exterior algebra. That is, we construct an ON-basis \(\rd x_I\) for the \(k\)-forms labeled by ordered sets \(I=\set{i_1<i_2<\cdots<i_k}\), where \(i_j\in \set{1,\ldots, d}\), as
\[ \rd x_I := \rd x_{i_1}\wedge \rd x_{i_2}\wedge \cdots\wedge \rd x_{i_k}. \]
In particular, we see that 
\begin{equation}
	\label{eq:deimdiniand}
	\mathrm{rk} \wedge^k T^*\Omega=\dim \wedge^k \C^d=\begin{pmatrix} d\\ k\end{pmatrix}.
\end{equation}
For instance, \(\rd x_1\wedge\rd x_2\wedge \cdots \wedge \rd x_d\) is the basis element of choice for \(\wedge^d T^*\Omega\). In these bases, for a function \(f\in C^\infty(\Omega)\) we have that 
\[
	\slashed{D}(f\rd x_I)=
\sum_{j\notin I}\partial_{x_j}f\rd x_j\wedge \rd x_I-\sum_{j\in I}\sign(j,I)\partial_{x_j}f\rd x_{I\setminus \{j\}},
\]
where \(\sign(j,I)\in \set{-1,1}\) is determined by \(\rd x_j\wedge \rd x_{I\setminus \{j\}}=\sign(j,I)\rd x_I\). We then have
\begin{equation}
\label{eq:dslahsedcoordlaplacian}
\slashed{D}^2(f\rd x_I)=(\Delta f)\rd x_I.
\end{equation}
In particular, \(\Delta_{k,a}\) is a realization of the scalar Laplacian on each of the basis vectors of \(\wedge^k \mathbb{C}^d\).

\section{Spectral properties of the de Rham complex}
\label{sec:SpectralPropdeRham}
In this section we use the notions and results introduced in the previous sections to obtain estimates using the ideas from \cite{rohlfr} applied to the de Rham complex. In the next section they are used to prove Rohleder's inequality \eqref{eq:rohlineq}. We will henceforth only consider a domain \(\Omega\subseteq \R^d\) which is bounded, connected and has Lipschitz boundary. We provide a higher-dimensional analog to estimates appearing in \cite{rohlfr,RohlederCurlCurl}. We believe they are of significant interest for future considerations improving estimates between eigenvalues of Dirichlet and Neumann Laplacians, for instance the conjectural bound \eqref{eq:conjecture}.
 
\begin{lemma}
\label{lemma:rohledergeneralization}
Let \(\Omega\subset \mathbb{R}^d\) be a connected and bounded domain with Lipschitz boundary with \(d\geq 2\) and  \((\Delta_{k,a})_{k=0,\dots,d}\) be the corresponding Hodge Laplacians on \(k\)-forms with absolute boundary conditions. For \(\lambda\geq 0\),
\[ d ~ N(\Delta_{d,a},\lambda) + m(\Delta_{d,a}, \lambda) \leq N(\Delta_{d-1,a},\lambda). \]
\end{lemma}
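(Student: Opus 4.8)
The plan is to apply Glazman's lemma (\autoref{lemma:Glazman}) to $\Delta_{d-1,a}$: I will exhibit a subspace $V\subseteq\Dom(q_{d-1,a})$ of dimension $d\,N(\Delta_{d,a},\lambda)+m(\Delta_{d,a},\lambda)$ on which the sesquilinear form $Q:=q_{d-1,a}-\lambda\langle\cdot,\cdot\rangle$ is negative semidefinite. Write $M:=N(\Delta_{d,a},\lambda)$ and $m:=m(\Delta_{d,a},\lambda)$. I may assume $\lambda>0$, since for $\lambda=0$ one has $\ker\Delta_{d,a}\cong H^d(\Omega;\C)=0$ and both sides vanish. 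Via the Hodge star $f\mapsto f\,\rd V$ the operator $\Delta_{d,a}=\rd_{d-1,\max}\rd_{d-1,\max}^*$ (here $\rd_d=0$) is identified with the Dirichlet Laplacian on functions; let $E_{\le\lambda},E_\lambda\subseteq H^1_0(\Omega)$ denote the spans of Dirichlet eigenfunctions with eigenvalue $\le\lambda$ and $=\lambda$, so $\dim E_{\le\lambda}=M$ and $\dim E_\lambda=m$.

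First I would build the bulk of $V$ from $0$-forms. For $j=1,\dots,d$ put $I_j:=\{1,\dots,d\}\setminus\{j\}$ and set
\[ V_0:=\bigoplus_{j=1}^d\setcond{g\,\rd x_{I_j}}{g\in E_{\le\lambda}}\subseteq H^1_0(\Omega;\wedge^{d-1}T^*\Omega). \]
Since the $d$ basis forms $\rd x_{I_j}$ are orthonormal, $\dim V_0=dM$. For $u=\sum_j g_j\,\rd x_{I_j}\in V_0$ the chain of extensions $\tilde q_{d-1,(0)}\subseteq q_{d-1,a}$ established in the proof of \autoref{lemma:base_estimate_Delta_ka} gives $u\in\Dom(q_{d-1,a})$ with $q_{d-1,a}(u)=\sum_{j}\int_\Omega|\nabla g_j|^2\,\rd x$; as each $g_j\in E_{\le\lambda}\subseteq H^1_0(\Omega)$ satisfies $\int_\Omega|\nabla g_j|^2\le\lambda\int_\Omega|g_j|^2$, this shows $Q\le 0$ on $V_0$.

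Next I would add an $m$-dimensional space accounting for the eigenvalue $\lambda$. Let $\mathcal E_\lambda:=\ker(\Delta_{d,a}-\lambda)$ and set $V_1:=\delta_{d-1}(\mathcal E_\lambda)=\rd_{d-1,\max}^*(\mathcal E_\lambda)$. For $0\ne\omega\in\mathcal E_\lambda$ the form $\eta:=\delta_{d-1}\omega$ is nonzero, since otherwise $\lambda\omega=\Delta_{d,a}\omega=\rd_{d-1}\delta_{d-1}\omega=0$; hence $\delta_{d-1}$ is injective on $\mathcal E_\lambda$ and $\dim V_1=m$. Moreover each such $\eta$ is a $\lambda$-eigenform of $\Delta_{d-1,a}$: using $\delta_{d-2}\delta_{d-1}=0$ and $\rd_{d-1}\delta_{d-1}\omega=\Delta_{d,a}\omega=\lambda\omega$ one computes $\Delta_{d-1,a}\eta=\rd_{d-1,\max}^*(\lambda\omega)=\lambda\eta$. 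The decisive point is that $Q$ has no cross terms between $\Dom(q_{d-1,a})$ and $V_1$: for $u\in\Dom(q_{d-1,a})$ and $\eta\in V_1\subseteq\Dom(\Delta_{d-1,a})$,
\[ Q(u,\eta)=q_{d-1,a}(u,\eta)-\lambda\langle u,\eta\rangle=\langle u,\Delta_{d-1,a}\eta\rangle-\lambda\langle u,\eta\rangle=0, \]
and $Q(\eta)=0$ since $\eta$ is a $\lambda$-eigenform. Thus for any $w=u+\eta$ with $u\in V_0$, $\eta\in V_1$ one gets $Q(w)=Q(u)+2\real Q(u,\eta)+Q(\eta)=Q(u)\le 0$, so $Q$ is negative semidefinite on $V_0+V_1$.

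The main obstacle is to prove the transversality $V_0\cap V_1=0$, for only then does $\dim(V_0+V_1)=dM+m$ and Glazman's lemma yield the claim. Unwinding the definitions, an element of $V_0\cap V_1$ is $\eta=\delta_{d-1}(f\,\rd V)$ with $f\in E_\lambda$ whose components $\pm\partial_j f$ all lie in $E_{\le\lambda}\subseteq H^1_0(\Omega)$; hence $f\in H^2(\Omega)$ and both $f$ and $\nabla f$ have vanishing trace on $\partial\Omega$. I would finish by unique continuation: the extension $\tilde f$ of $f$ by zero then lies in $H^2(\R^d)$ and, the Cauchy data being zero, solves $(\Delta-\lambda)\tilde f=0$ weakly on all of $\R^d$ while vanishing on the nonempty open set $\R^d\setminus\overline{\Omega}$, so Aronszajn's unique continuation forces $\tilde f\equiv 0$ and thus $f=0$. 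This boundary unique continuation step is the only place where more than the soft Hilbert-complex formalism is needed, and it is where the Lipschitz regularity of $\partial\Omega$ must be handled with care.
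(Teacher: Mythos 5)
Your proposal is correct and follows essentially the same route as the paper's proof: the same subspace of $(d-1)$-forms built from Dirichlet eigenfunctions times the standard basis forms, the same space $\delta_{d-1}\ker(\Delta_{d,a}-\lambda)$ of eigenforms of $\Delta_{d-1,a}$, transversality via extension by zero and unique continuation, and Glazman's lemma. The only (harmless) differences are presentational: you verify the cross terms abstractly through $q_{d-1,a}(u,\eta)=\langle u,\Delta_{d-1,a}\eta\rangle$ rather than expanding the form, and you treat $\lambda=0$ and the injectivity of $\delta_{d-1}$ on the eigenspace explicitly.
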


\begin{proof}
Let \(N := N(\Delta_{d,a}, \lambda)\) and select \(N\) orthonormal eigenfunctions \(f_1 dV,\ldots, f_N dV\)  of \(\Delta_{d,a}\) with eigenvalues less or equal to \(\lambda\). Consider the \(dN\)-dimensional space
\[V_1 := \Span\setcond{f_j\widehat{\rd x_l}}{j=1,\ldots, N, \; l=1,\ldots, d} \subseteq H_0^1(\Omega;\wedge^{d-1}T^*\Omega)\] 
where \(\widehat{\rd x_l} :=  \rd x_1\wedge\cdots \wedge \rd x_{l-1}\wedge \rd x_{l+1}\wedge\cdots \rd x_d\) denotes the standard ON-basis for \(\wedge^{d-1}\mathbb{C}^d\). We can estimate \(q_{d-1, a}(u)\leq \lambda\norm{u}_{L^2(\Omega;\wedge^{d-1}T^*\Omega)}^2\) for \(u\in V_1\).

Next, we will consider the space 
\[
	V_2 := \delta_{d-1}\ker(\Delta_{d,a} - \lambda)
\]
for which \(\dim V_2 = \dim \delta_{d-1}(\ker (\Delta_{d,a} - \lambda)) = \dim (\ker (\Delta_{d,a} - \lambda)) = m(\Delta_{d, a}, \lambda)\). We will show that \(V_1\cap V_2 = 0\), that is, if \(\delta_{d-1} g\in V_1\) for some \(g\in \ker (\Delta_{d,a} - \lambda)\) then \(g\) is identically zero (this proof is similar to \cite[Lemma 3.2.36]{SpectralGeometryBook}). We denote by \(\gamma_0\) the trace operator. 
Since \(g\in \ker(\Delta_{d,a} -\lambda)\) we have that \(g\in H^1_0(\Omega; \wedge^d T^\ast \Omega)\), which implies that \(\gamma_0 g =0\). Moreover, since \(\delta_{d-1} g \in V_1 \subseteq H^1_0(\Omega;\wedge^{d-1}T^*\Omega)\) we have that \(\gamma_0 \delta_{d-1} g = 0\). This means that we can extend \(g\) by zero to \(\tilde{g}\in H^1(\mathbb{R}^d; \wedge^d \R^d)\). For any \(v\in C_c^{\infty}(\mathbb{R}^d; \wedge^d \R^d)\)
\begin{align*}
\braket{\delta_{d-1} \tilde{g}, \delta_{d-1} v}_{L^2(\mathbb{R}^d; \wedge^{d-1} \mathbb{R}^d)}=~& \braket{\delta_{d-1} g, \delta_{d-1} v}_{L^2(\Omega; \wedge^{d-1} T^*\Omega)} \\
=~& \braket{\Delta_d g,  v}_{L^2(\Omega; \wedge^d T^*\Omega)}  \\
=~& \lambda  \braket{ g,  v}_{L^2(\Omega; \wedge^d T^*\Omega)} \\
=~& \lambda  \braket{ g,  v}_{L^2(\mathbb{R}^d; \wedge^d \mathbb{R}^d)},
\end{align*}
where the boundary term is zero because  \(\gamma_0 \delta_{d-1} g = 0\). Therefore, \(\tilde{g} \in H^1(\mathbb{R}^d; \wedge^d \C^d)\) is a solution of \(\Delta_d \tilde{g} = \lambda \tilde{g}\) in the weak sense on \(\mathbb{R}^d\). By elliptic regularity, we get that \(\tilde{g}\) is real-analytic. Since \(\tilde{g}|_{\mathbb{R}^d \setminus \Omega} \equiv 0 \), then unique continuation results imply that \(\tilde{g}\equiv 0\). This ensures that the space \(V := V_1+V_2\) has
\begin{align}
	\label{eq:dim_dirichlet_and_multiplicity}
	\dim(V) =~& d~N + m(\Delta_{d,a}, \lambda). 
\end{align}
Lastly, for \(u\in V\) of the form \(u=v_1+v_2\) with \(v_1\in V_1\subseteq H^1_0(\Omega,\wedge^{d-1}T^*\Omega)\) and \(v_2\in V_2\subseteq \ker(\Delta_{d-1,a}-\lambda)\) we see that
\begin{equation*}
	\begin{split}
	q_{d-1,a}(u) &= \norm{\rd_{d-1}v_1 + \rd_{d-1}v_2}^2_{L^2(\Omega;\wedge^{d}T^*\Omega)}+ \norm{\delta_{d-2}v_1}^2_{L^2(\Omega;\wedge^{d-2}T^*\Omega)} \\
				&= q_{d-1,a}(v_1) + \norm{\rd_{d-1}v_2}^2_{L^2(\Omega;\wedge^{d}T^*\Omega)} + 2\real \braket{\rd_{d-1}v_1, \rd_{d-1}v_2}_{L^2(\Omega;\wedge^{d}T^*\Omega)} \\
				&= q_{d-1,a}(v_1) + \lambda \norm{v_2}^2_{L^2(\Omega;\wedge^{d-1}T^*\Omega)} + 2 \lambda \real\braket{v_1, v_2}_{L^2(\Omega;\wedge^{d}T^*\Omega)} \\
				&\leq \lambda \norm{u}_{L^2(\Omega;\wedge^{d-1}T^*\Omega)}^2
\end{split}
\end{equation*}
The result follows from Glazman's lemma using the linear subspace \(V\).
\end{proof}

\section{Comparing forms to vectors in two and three dimensions}
\label{sec:Rohleder_language}

The aim of this section is to rewrite the results of \autoref{sec:SpectralPropdeRham} in vector operators in two and three dimensions to see how our results compare to those in \cite{rohlfr, RohlederCurlCurl,kennedy2024hotspotsconjecturehigher}. 

\subsection{Inequalities in dimension 2}
Let \(\Omega\subset \mathbb{R}^2\) be a connected and bounded domain with Lipschitz boundary. In two dimensions, the exterior differential on \(1\)-forms introduced in \eqref{eq:exterior_differential} acts as
\[ \rd_1 (u_1 \rd x_1 + u_2 \rd x_2) =  \big(-\partial_{x_2}u_1 + \partial_{x_1}u_2\big) \rd x_1\wedge \rd x_2, \]
and the formal adjoint of the exterior differential on \(0\)-forms introduced in \eqref{eq:delta_def} as
\[
	\delta_0 (u_1 \rd x_1 + u_2 \rd x_2)=  \partial_{x_1}u_1 + \partial_{x_2}u_2,
\]
which can be identified with the differential expressions \(\omega(u) := \partial_{x_1}u_2 - \partial_{x_2}u_1\) and \(\divergence u\) introduced in \cite{rohlfr}. In other words, the form \(q_{1,a}\) associated with \(\Delta_{1,a}\) with domain \(\Dom(q_1,a) = \Dom (\rd_{1,{\max}})\cap \Dom(\delta_{0,{\max}})\) is exactly the same as
\[
	\mathfrak{a}[u,v] = \int_{\Omega} (\divergence u \, \overline{\divergence v} + \omega(u)\overline{\omega (v)})\, \rd x,
\]
with
\[ \Dom \mathfrak{a} =\setcond{u\in L^2(\Omega)^2}{\divergence u, \omega(u) \in L^2(\Omega), \braket{u|_{\partial \Omega} , \nu}=0}, \] 
where \(\nu\) the unit normal vector, introduced in \cite[Section 3]{rohlfr}. This means that the operator \(A\) introduced in \cite[Proposition 3.1]{rohlfr} coincides with \(\Delta_{1,a}\). Next, we present \cite[Theorem 4.1]{rohlfr} and give an analogous proof using \autoref{lemma:rohledergeneralization}.

\begin{prop}
	\label{prop:Rohleder2D}
	Let \(\Omega\subset \mathbb{R}^2\) be a connected and bounded domain with Lipschitz boundary. Let \(j=1,2,\ldots\) then
	\begin{equation}
		\label{eq:Rohleder2D}
		\lambda_{j+\chi(\Omega)+m(\Delta_D, \lambda_j(\Delta_D))}(\Delta_N) \leq \lambda_j (\Delta_D) .
	\end{equation}
\end{prop}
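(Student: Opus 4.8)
The plan is to specialize the machinery of \autoref{sec:SpectralPropdeRham} to \(d=2\), where the de Rham complex has only the three terms \(\mathcal{H}_0,\mathcal{H}_1,\mathcal{H}_2\) and the intermediate Laplacian \(\Delta_{1,a}\) can be eliminated using the Euler-characteristic identity. Recall from \autoref{sec:derahm} that \(\Delta_{0,a}=\Delta_N\) and that, via the Hodge star, \(\Delta_{2,a}\) is unitarily equivalent to \(\Delta_D\); in particular \(N(\Delta_{2,a},\lambda)=N(\Delta_D,\lambda)\) and \(m(\Delta_{2,a},\lambda)=m(\Delta_D,\lambda)\) for every \(\lambda\geq 0\).

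First I would invoke the alternating-sum formula of \autoref{thm:complex_alternating_sum} together with the computation \(\chi(L^2(\Omega;\wedge^\bullet T^*\Omega),\rd_{\bullet,{\max}})=\chi(\Omega)\) from \autoref{lkjnkjnkjanda}. In dimension two this reads
\[ N(\Delta_N,\lambda)-N(\Delta_{1,a},\lambda)+N(\Delta_D,\lambda)=\chi(\Omega), \]
which solves for the middle term as \(N(\Delta_{1,a},\lambda)=N(\Delta_N,\lambda)+N(\Delta_D,\lambda)-\chi(\Omega)\). Next I would apply \autoref{lemma:rohledergeneralization} with \(d=2\), which gives \(2\,N(\Delta_D,\lambda)+m(\Delta_D,\lambda)\leq N(\Delta_{1,a},\lambda)\). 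Substituting the expression for \(N(\Delta_{1,a},\lambda)\) and cancelling one copy of \(N(\Delta_D,\lambda)\) yields the clean counting-function inequality
\[ N(\Delta_D,\lambda)+m(\Delta_D,\lambda)+\chi(\Omega)\leq N(\Delta_N,\lambda). \]

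Finally I would convert this into the eigenvalue statement exactly as in the proof of Friedlander's inequality, by setting \(\lambda=\lambda_j(\Delta_D)\). Using \(N(\Delta_D,\lambda_j(\Delta_D))\geq j\) from \eqref{multiident}, the left-hand side is at least \(j+m(\Delta_D,\lambda_j(\Delta_D))+\chi(\Omega)\), so that \(N(\Delta_N,\lambda_j(\Delta_D))\geq j+m(\Delta_D,\lambda_j(\Delta_D))+\chi(\Omega)\). The equivalence \(N(T,\lambda)\geq x\Leftrightarrow \lambda_{\lceil x\rceil}(T)\leq\lambda\) then produces \eqref{eq:Rohleder2D}.

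The algebra is routine; the step requiring care is the justification that the Euler-characteristic identity holds at the level of counting functions for \emph{every} \(\lambda\geq 0\), rather than only asymptotically — this is precisely the content of \autoref{thm:complex_alternating_sum} and relies on \(\rd_{k}^*\rd_{k}\) and \(\rd_{k}\rd_{k}^*\) sharing their nonzero spectrum with multiplicity through the Hodge decomposition \eqref{eq:hodgrealref}. A secondary subtlety is that \(\chi(\Omega)\) need not be positive when \(\Omega\) is not simply connected, but this does not obstruct the final step, since \(\lceil j+m(\Delta_D,\lambda_j(\Delta_D))+\chi(\Omega)\rceil\) is still a legitimate eigenvalue index; in the simply connected case \(\chi(\Omega)=1\) and \(m(\Delta_D,\lambda_j(\Delta_D))\geq 1\), recovering Rohleder's inequality \(\lambda_{j+2}(\Delta_N)\leq\lambda_j(\Delta_D)\).
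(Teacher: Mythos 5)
Your proposal is correct and follows essentially the same route as the paper: it combines \autoref{lemma:rohledergeneralization} in the case \(d=2\) with the alternating-sum identity of \autoref{thm:complex_alternating_sum} (evaluated as \(\chi(\Omega)\) via \autoref{lkjnkjnkjanda}) to eliminate \(N(\Delta_{1,a},\lambda)\), and then converts the resulting counting-function inequality into the eigenvalue statement exactly as in the proof of Friedlander's inequality. The only difference is presentational: you spell out the final conversion step via \eqref{multiident}, which the paper leaves implicit.
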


\begin{proof}
	We let \(j\in \mathbb{N}\), and fix \(\lambda := \lambda_j(\Delta_{2,a})\) as well as \(m := m(\Delta_{2,a}, \lambda_j(\Delta_{2,a}))\). By \autoref{lemma:rohledergeneralization} we know that 
	\begin{equation}\label{eq:lemma4.2_in_2D}
		2N(\Delta_{2,a}, \lambda) + m\leq N(\Delta_{1,a}, \lambda),
	\end{equation}
and by \autoref{thm:complex_alternating_sum} we have
	\begin{equation}
	\label{eq:alternating_sum2D}
		N(\Delta_{0,a}, \lambda) - N(\Delta_{1,a}, \lambda)+N(\Delta_{2,a}, \lambda) = \chi(\Omega).
	\end{equation}
Combining \eqref{eq:lemma4.2_in_2D} with  \eqref{eq:alternating_sum2D} 
	\begin{equation*}
		\begin{split}
			N(\Delta_{0,a}, \lambda)  \geq&  \chi(\Omega)  + N(\Delta_{2,a}, \lambda) + m
		\end{split}
	\end{equation*}
which gives \eqref{eq:Rohleder2D} since \(\Delta_N = \Delta_{0,a}\) and \(\Delta_D = \Delta_{2,a}\) (see \autoref{sec:derahm}).
\end{proof}

\begin{cor}\cite[Theorem 4.1]{rohlfr}
\label{cor:Rohleder2D}
	Let \(\Omega\subset \mathbb{R}^2\) be a simply connected and bounded domain with Lipschitz boundary. Then
	\[
		\lambda_{j+2}(\Delta_N) \leq \lambda_j (\Delta_D)
	\]
	for all \(j=1,2,3,\ldots\).
\end{cor}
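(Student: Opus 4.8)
The plan is to specialize Proposition \ref{prop:Rohleder2D} to the simply connected case and then absorb the multiplicity term using monotonicity of the eigenvalues. Since all the analytic content is already contained in Proposition \ref{prop:Rohleder2D}, this is essentially a matter of inserting the correct topological input and a one-line index comparison.

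First I would compute the Euler characteristic. A bounded, simply connected planar domain with Lipschitz boundary is contractible, so by the identification of \(\ker(\Delta_{k,a})\) with the absolute cohomology \(H^k(\Omega;\C)\) from \autoref{lkjnkjnkjanda} we have \(\dim\ker(\Delta_{0,a})=1\) and \(\dim\ker(\Delta_{1,a})=\dim\ker(\Delta_{2,a})=0\), whence \(\chi(\Omega)=1\). Substituting \(\chi(\Omega)=1\) into \eqref{eq:Rohleder2D} yields, for each \(j\),
\[
	\lambda_{j+1+m(\Delta_D,\,\lambda_j(\Delta_D))}(\Delta_N) \leq \lambda_j(\Delta_D).
\]

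Next I would dispose of the multiplicity term. Because \(\lambda_j(\Delta_D)\) is by definition an eigenvalue of \(\Delta_D\), its multiplicity satisfies \(m(\Delta_D,\lambda_j(\Delta_D))\geq 1\), so the index on the left obeys \(j+1+m(\Delta_D,\lambda_j(\Delta_D))\geq j+2\). Since the Neumann eigenvalues are ordered increasingly, enlarging the index can only increase the eigenvalue, giving
\[
	\lambda_{j+2}(\Delta_N) \leq \lambda_{j+1+m(\Delta_D,\,\lambda_j(\Delta_D))}(\Delta_N) \leq \lambda_j(\Delta_D),
\]
which is the claimed inequality.

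There is essentially no analytic obstacle here, as the heavy lifting is done in \autoref{lemma:rohledergeneralization} and \autoref{prop:Rohleder2D}; the only point requiring genuine care is the topological input \(\chi(\Omega)=1\), which must be justified for \emph{Lipschitz} rather than smooth domains. The cleanest route, as indicated above, is to bypass any triangulation argument and instead use that a bounded simply connected open subset of \(\R^2\) is contractible, so that \autoref{lkjnkjnkjanda} directly supplies the dimensions of the harmonic spaces. I expect this to be the only step where one must be slightly careful about the regularity of the boundary.
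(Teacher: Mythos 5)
Your proposal is correct and follows the paper's own proof essentially verbatim: specialize \autoref{prop:Rohleder2D} with \(\chi(\Omega)=1\) (justified via contractibility of a simply connected planar domain and \autoref{lkjnkjnkjanda}) and absorb the multiplicity term using \(m(\Delta_D,\lambda_j(\Delta_D))\geq 1\) together with the monotone ordering of the Neumann eigenvalues.
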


\begin{proof}
	If \(\Omega\subset \mathbb{R}^2\) is simply connected, then \(\chi(\Omega)=1\), so the result follows from \autoref{prop:Rohleder2D} and \(m(\Delta_D, \lambda_j(\Delta_D))\geq 1\). 
\end{proof}

\begin{remark}
	If \(\Omega\subseteq \R^2\) is non-simply connected, \autoref{prop:Rohleder2D} contains no new information beyond Friedlander's inequality \eqref{eq:fried}. In fact, if \(\Omega\) has $g=1$ holes we retrieve Friedlander's inequality \eqref{eq:fried}, for \(g=2\) holes then \(\chi(\Omega)=-1\) and \autoref{prop:Rohleder2D} gives the same bound as simply applying Glazman's lemma to the fact that the form domain of \(\Delta_D\) is contained in the form domain of \(\Delta_N\). If \(\Omega\) has $g>2$ holes, \autoref{prop:Rohleder2D} gives a worse bound than variational principles.
\end{remark}

\begin{remark}
	In the previous corollary we used the fact that \(m(\Delta_D, \lambda_j(\Delta_D))\geq 1\). Note that keeping the multiplicity term in \eqref{eq:Rohleder2D} will give 
	\[
		\lambda_{j+1+m(\Delta_D, \lambda_j(\Delta_D))}(\Delta_N) \leq \lambda_j (\Delta_D). 
	\]
	This inequality can be observed in the case of the unit disc where \(m(\Delta_D, \lambda_2(\Delta_D)) = 2 \), i.e. \(\lambda_2(\Delta_D) = \lambda_3(\Delta_D)\). For the disc we know that \(\lambda_5(\Delta_N) <\lambda_3(\Delta_D)\) but \(\lambda_6(\Delta_N)= \lambda_3(\Delta_D)\), where this equality comes from the fact that the zeros \(j_{m,n}\) of the \(m\):th Bessel function \(J_m(r)\) and the positive zeros \(j'_{m,n}\) of the derivative \(J'_m(r)\) fulfill \(j_{1,n} =j'_{0,n+1}\) for \(n \in \mathbb{N}\).
\end{remark}

\begin{remark}\label{remark:strictineq2D}
	Rohleder was able to obtain strict inequality in \eqref{eq:Rohleder2D} for simply connected domains if \(\lambda_k(\Delta_D)\) is a simple eigenvalue or \(\partial  \Omega\) contains a straight line segment. We refer to \cite[Theorem 4.1]{rohlfr} for the proof. 
\end{remark}

\subsection{Rohleder's bound on eigenvalues for the curl curl operator}

Let \(\Omega\subset \mathbb{R}^d\) be a connected and bounded domain with Lipschitz boundary. We can define a positive, self-adjoint operator 
\[ \mathfrak{C} := \rd_{d-2,{\max}}\rd_{d-2,{\max}}^*, \] 
which is densely defined on the Hilbert subspace 
\[ \ker(\rd_{d-1,{\max}})\subseteq L^2(\Omega,\wedge^{d-1}T^*\Omega). \]
The form associated with the operator \(\mathfrak{C}\) takes the form
\[ q_\mathfrak{C}(u) := \int_\Omega |\delta_{d-2}u|^2\rd x, \]
that by \autoref{thm:q_ka} has the domain
\begin{align*}
\Dom(q_\mathfrak{C})\equiv& \ker(\rd_{d-1,{\max}})\cap \Dom(\rd_{d-2,{\max}}^*)\\
=~&\setcond{u\in \ker(\rd_{d-1,{\max}})\cap H^1(\Omega;\wedge^{d-1}T^*\Omega)}{B_au=0}.
\end{align*}
For a \((d-1)\)-form \(u\), we can near the boundary write \(u=u_0\rd V_\partial+\rd x_n\wedge u_2\) where \(u_0\) is a scalar function, \(\rd V_\partial\) the volume form on \(\partial \Omega\) induced from the Euclidean metric, and \(u_2\) is a section to \(\wedge^{d-2}T^*\partial \Omega\). In particular, if \(u\in \ker(\rd_{d-1,{\max}})\cap H^1(\Omega;\wedge^{d-1}T^*\Omega)\) then \(B_au=0\) if and only if \(u_2|_{\partial \Omega}=0\). In analogy with \cite{RohlederCurlCurl}, we call \(\mathfrak{C}\) the curl curl operator.

\begin{prop}
	\label{prop:RohlederCurl}
	Let \(\Omega\subset \mathbb{R}^d\) be a connected and bounded domain with Lipschitz boundary and write \(\mathfrak{C}\) for its curl curl operator. Let \(j=1,2,\ldots\) then
	\begin{equation}
		\label{eq:RohlederdD}
		\lambda_{(d-1)j+m(\Delta_D, \lambda_j(\Delta_D))}(\mathfrak{C}) \leq \lambda_j (\Delta_D) .
	\end{equation}
\end{prop}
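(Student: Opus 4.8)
The plan is to identify the spectrum of \(\mathfrak{C}\) with a piece of the spectrum of \(\Delta_{d-1,a}\) through the Hodge decomposition, express everything in terms of counting functions, and then apply \autoref{lemma:rohledergeneralization}. First I would note that \(\mathfrak{C}=\rd_{d-2,\max}\rd_{d-2,\max}^*\) is exactly the restriction of \(\Delta_{d-1,a}=\rd_{d-1,\max}^*\rd_{d-1,\max}+\rd_{d-2,\max}\rd_{d-2,\max}^*\) to the closed reducing subspace \(\ker(\rd_{d-1,\max})\). The Hodge decomposition \eqref{hodgedecomp} in degree \(d-1\) restricts to
\[
\ker(\rd_{d-1,\max})=\ker(\Delta_{d-1,a})\oplus \Ran(\rd_{d-2,\max}),
\]
from which I would read off that \(\ker(\mathfrak{C})=\ker(\Delta_{d-1,a})\), while the positive eigenforms of the full operator \(\rd_{d-2,\max}\rd_{d-2,\max}^*\) already lie in \(\Ran(\rd_{d-2,\max})\subseteq\ker(\rd_{d-1,\max})\), so restricting loses none of the positive spectrum.

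Combining this with the counting-function identity \eqref{eq:hodgrealref} for \(\Delta_{d-1,a}\) gives
\[
N(\Delta_{d-1,a},\lambda)=N(\mathfrak{C},\lambda)+N(\rd_{d-1,\max}^*\rd_{d-1,\max};(0,\lambda]).
\]
I would then simplify the last term. Since there are no \((d+1)\)-forms we have \(\Delta_D=\Delta_{d,a}=\rd_{d-1,\max}\rd_{d-1,\max}^*\), and as \(\Omega\) is connected the Dirichlet Laplacian has trivial kernel, so \(N(\Delta_D,\lambda)=N(\Delta_D;(0,\lambda])\). Using that \(t^*t\) and \(tt^*\) share their nonzero spectrum with multiplicities, the last term equals \(N(\Delta_D,\lambda)\), whence
\[
N(\mathfrak{C},\lambda)=N(\Delta_{d-1,a},\lambda)-N(\Delta_D,\lambda).
\]

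With this identity in hand, feeding in \autoref{lemma:rohledergeneralization} in the form \(d\,N(\Delta_D,\lambda)+m(\Delta_D,\lambda)\le N(\Delta_{d-1,a},\lambda)\) (recall \(\Delta_{d,a}=\Delta_D\)) yields the estimate
\[
(d-1)\,N(\Delta_D,\lambda)+m(\Delta_D,\lambda)\le N(\mathfrak{C},\lambda).
\]
Finally, I would set \(\lambda=\lambda_j(\Delta_D)\) and use \(N(\Delta_D,\lambda_j(\Delta_D))\ge j\) from \eqref{multiident} together with \(d\ge 2\) to obtain \(N(\mathfrak{C},\lambda_j(\Delta_D))\ge (d-1)j+m(\Delta_D,\lambda_j(\Delta_D))\), and convert back via \(N(T,\lambda)\ge n\Leftrightarrow \lambda_n(T)\le\lambda\) to arrive at \eqref{eq:RohlederdD}.

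The main obstacle I anticipate is the bookkeeping in the first two paragraphs: one must verify carefully that passing to \(\ker(\rd_{d-1,\max})\) deletes precisely the \(\Ran(\rd_{d-1,\max}^*)\) summand of the degree \((d-1)\) Hodge decomposition, so that the kernels of \(\mathfrak{C}\) and \(\Delta_{d-1,a}\) coincide and exactly the single term \(N(\rd_{d-1,\max}^*\rd_{d-1,\max};(0,\lambda])\) is subtracted. The subsequent passage to eigenvalue indices is routine and mirrors the proof of Friedlander's inequality \eqref{eq:fried} given above.
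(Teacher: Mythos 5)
Your proposal is correct and follows essentially the same route as the paper: the Hodge decomposition in degree \(d-1\) yields \(N(\mathfrak{C},\lambda)=N(\Delta_{d-1,a},\lambda)-N(\Delta_D,\lambda)\), and then \autoref{lemma:rohledergeneralization} gives \((d-1)N(\Delta_D,\lambda)+m(\Delta_D,\lambda)\leq N(\mathfrak{C},\lambda)\). Your extra care in verifying that restricting to \(\ker(\rd_{d-1,\max})\) removes exactly the \(\Ran(\rd_{d-1,\max}^*)\) summand, and the explicit conversion from counting functions to eigenvalue indices, are just fuller versions of steps the paper leaves implicit.
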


\begin{proof}
	By the definition of \(\mathfrak{C}\) and the Hodge decomposition, we have for \(\lambda\geq 0\) that 
	\[ N(\mathfrak{C},\lambda)=\dim (\ker(\Delta_{d-1,a}))+N(\rd_{d-2,{\max}}\rd^*_{d-2,{\max}};(0,\lambda]). \]
	In particular, using Equation \eqref{eq:hodgrealref}
	\begin{align*}
		N(\mathfrak{C},\lambda)=~&N(\Delta_{d-1,a},\lambda)-N(\rd^*_{d-1,{\max}}\rd_{d-1,{\max}};(0,\lambda])\\
		=~&N(\Delta_{d-1,a},\lambda)-N(\Delta_D,\lambda).
	\end{align*}
	From \autoref{lemma:rohledergeneralization}, we see that
	\[ N(\mathfrak{C},\lambda)\geq (d-1) ~ N(\Delta_D,\lambda) + m(\Delta_D, \lambda), \]
	and the proof is complete.
\end{proof}

In three dimensions, the exterior codifferential \(\delta_1\) on \(2\)-forms acts as
\begin{align*}
	\delta_1 (u_1 \rd x_2\wedge \rd x_3 + &u_2 \rd x_3\wedge \rd x_1+u_3\rd x_1\wedge \rd x_2) \\
=~&  (\partial_2u_3-\partial_3u_2)\rd x_1+(\partial_1u_3-\partial_3u_1)\rd x_2+(\partial_2u_1-\partial_1u_2)\rd x_3,
\end{align*}
so up to the Hodge star we can identify \(\delta_1\) with the curl operator in three dimensions. A similar computation shows that \(\rd_2\) can be identified with the divergence of vector fields. We note the discussion above shows that a \(2\)-form \(u\) belongs to \(\Dom(q_\mathfrak{C})\) if and only if \(\rd_2u=0\) in distributional sense and \(\star u\) restricts to the zero form on \(\partial \Omega\). If we identify \(2\)-forms with vector fields via the Hodge star, this means that \(u\) belongs to \(\Dom(q_\mathfrak{C})\) if and only if \(\mathrm{div}(u)=0\) in distributional sense and \(u\times \nu=0\) on \(\partial \Omega\). We see that in dimension \(3\), \(\mathfrak{C}\) coincides with the curl curl operator defined in \cite{RohlederCurlCurl} and in the notation of \cite{RohlederCurlCurl}, \(\lambda_j(\mathfrak{C})=\alpha_j\). In particular, \autoref{prop:RohlederCurl} extends \cite[Theorem 1.1]{RohlederCurlCurl} from dimension three to arbitrary dimension.

\begin{remark}
	In \cite[Theorem 1.1]{RohlederCurlCurl} strict inequality in \eqref{eq:RohlederdD} is attained when \(\Omega\) is a polyhedron or \(\lambda_k(\Delta_D)\) is a simple eigenvalue. An analogous proof could be carried on to obtain strict inequality between \(\mathfrak{C}\) and \(\Delta_D\) for \(d\geq 2\). 
\end{remark}

\section*{Acknowledgements}
We thank Jonathan Rohleder for his comments on an early version of the paper, as well as comments on the published version. We also thank Nikolay Filonov for a counterexample to an erroneous claim in a previous version. We thank Mikael Persson Sundqvist for the useful discussions. We are most grateful to Matthias Lesch for his careful explanations and helpful remarks on \cite{leschbruening}. G.M. would like to thank Daniel Sahlberg for providing numerical examples concerning domains in two dimensions. 

\bibliography{mybib.bib}
\bibliographystyle{alpha}

\end{document}